\definecolor{myurlcolor}{rgb}{0,0,0.4}
\definecolor{mycitecolor}{rgb}{0,0.5,0}
\definecolor{myrefcolor}{rgb}{0.5,0,0}
\Crefname{figure}{Figure}{Figures}
\tikzset{->-/.style={decoration={markings,mark=at position .5 with {\arrow{>}}},postaction={decorate}}}
\tikzset{-<-/.style={decoration={markings,mark=at position .5 with {\arrow{<}}},postaction={decorate}}}
\tikzstyle{box}=[fill=white, draw=black, shape=rectangle, inner sep=14pt]
\tikzstyle{forward arrow}=[->-]
\tikzstyle{backward arrow}=[-<-]
\newcommand{\beq}{\begin{equation}}
\newcommand{\eeq}{\end{equation}}
\newcommand{\R}{\mathbb{R}}
\newcommand{\eps}{\varepsilon}
\newcommand{\F}{\mathcal{F}}	
\newcommand{\E}{\mathbb{E}}	
\newcommand{\ii}{\mathrm{i}}
\newtheorem{dummy}{Dummy}[section]	
\newtheorem{thm}[dummy]{Theorem}
\newtheorem{defn}[dummy]{Definition}
\newtheorem{lem}[dummy]{Lemma}
\theoremstyle{definition}
\newtheorem{rem}[dummy]{Remark}
\Crefname{thm}{Theorem}{Theorems}
\Crefname{prob}{Problem}{Problems}
\let\originalleft\left
\let\originalright\right
\renewcommand{\left}{\mathopen{}\mathclose\bgroup\originalleft}
\renewcommand{\right}{\aftergroup\egroup\originalright}
\setlist[enumerate]{label=(\roman*),itemsep=5pt,topsep=8pt}
\setlist[itemize]{label=$\triangleright$,itemsep=5pt,topsep=6pt}
\newcommand{\newterm}[1]{\textbf{#1}}
\begin{document}



\title{Dynamics from iterated averaging}

\author{Tobias Fritz}
\address{Department of Mathematics, University of Innsbruck, Austria}
\email{tobias.fritz@uibk.ac.at}

\author{Nicol\'as Rivera}
\address{Institute of Statistics, University of Valparaíso, Chile}
\email{nicolas.rivera@uv.cl}

\keywords{}

\subjclass[2020]{Primary: 37A05. Secondary: 60A10, 80M50.}

\thanks{\textit{Acknowledgements.} We thank Paolo Perrone for raising the question which led to this work, Omer Tamuz and puzzling.SE users \emph{Pranay} and \emph{Nitrodon} for discussion and Jakub Czartowski for further comments including pointers to the literature on quantum thermodynamics. Nicol\'as Rivera was supported by ANID FONDECYT grant No 11251301 and ANID SIA grant No 85220033.}

\begin{abstract}
	We prove that for a standard Lebesgue space $X$, the strong operator closure of the semigroup generated by conditional expectations on $L^\infty(X)$ contains the group of measure-preserving automorphisms.
	This is based on a solution to the following puzzle: given $n$ full water tanks, each containing one unit of water, and $n$ empty ones, how much water can be transferred from the full tanks to the empty ones by repeatedly equilibrating the water levels between pairs of tanks?	
\end{abstract}

\maketitle

\section{Introduction}
\label{introduction}

Consider $n$ full water tanks, each containing one unit of water, and $n$ empty ones.
The tanks are all identical and at the exact same elevation.
Suppose that you have a hose at your disposal, using which you can transfer water from one tank to another.
But without a pump, all that you can do is to \emph{equilibrate} the water levels between two tanks.
Then under these conditions, how much water can you transfer from the full tanks to the empty ones?

This problem has a surprising answer: for large $n$, it is possible to transfer \emph{almost all} water from the full tanks to the empty ones.

\begin{thm}\label{thm:optimaltransport}
	\label{tanks}
	For this problem, there is a strategy which ends up with only
	\[
		\frac{n}{4^n} \binom{2n}{n} = \sqrt{\frac{n}{\pi}} + O(n^{-{1/2}})
	\]
	units of water in the originally full tanks, and this is optimal.
\end{thm}

\begin{rem}
	We present the proof in~\cref{optimal_strategies}, but already point out now that most of this result is not due to us.
	First of all, the strategy is a straightforward discretization of the well-known \emph{countercurrent heat exchanger} method for transferring heat between two fluids (\cref{heat}).
	Moreover, the strategy and its asymptotics also appear in recent work by Czartowski, de Oliveira Junior and Korzekwa in the context of quantum thermodynamics~\cite[Lemma~1]{COK}, but without the exact expression or an optimality proof.
	Finally, the optimality proof is largely due to user \emph{Nitrodon} on puzzling.stackexchange.com~\cite{nitrodon}.
	It applies to any initial configuration of water levels.
\end{rem}

Curiously, the fraction of water which remains in the originally full tanks is precisely the probability for $2n$ tosses of a fair coin to yield exactly $n$ heads, namely $4^{-n} \binom{2n}{n}$.
The fact that we can transfer all water as $n \to \infty$ may seem surprising: equilibrating the water between two tanks is a kind of averaging operation, and averaging should result in a general tendency towards equalization.
Instead, the theorem asserts that there is a strategy which amounts to a \emph{permutation} of the full tanks with the empty ones---at least to a very good approximation as $n \gg 1$.

An equilibration of two tanks can equivalently be described as taking the conditional expectation of the function which assigns to each tank its water level with respect to the $\sigma$-algebra which identifies these two tanks while distinguishes all others.
Thus a closely related question to the water tank puzzle is: which functions can be obtained by repeatedly taking conditional expectations of a function on a probability space, say a standard Lebesgue space?
Here the answer is similarly surprising: such repeated averaging can approximate any dynamics, by which we mean the action of a measure-preserving automorphism on the function.
Moreover, this works for any finite number of functions at a time.

\begin{thm}
	\label{dynamics}
	Let $(X, \Sigma, \mu)$ be a standard Lebesgue probability space and $T : X \to X$ a measure-preserving automorphism.
	Then for all $f_1, \dots, f_n \in L^\infty(X)$ and $\eps > 0$, there is a finite sequence of sub-$\sigma$-algebras $\F_1, \ldots, \F_m \subseteq \Sigma$ such that
	\[
		\left\| f_i \circ T - \E_m \cdots \E_1 f_i \right\|_\infty < \eps \qquad \forall i = 1, \dots, n,
	\]
	where each $\E_j : L^\infty(X) \to L^\infty(X)$ is the conditional expectation operator with respect to $\F_j$.
\end{thm}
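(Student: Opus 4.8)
The plan is to derive the statement from \Cref{tanks} through a chain of reductions. Recall, as explained in the introduction, that equilibrating two tanks of equal size is the conditional expectation onto the $\sigma$-algebra that merges exactly those two atoms and keeps all others separate, and that such an operator is an $\ell^\infty$-contraction which changes a function only on the merged pair. Since $X$ is a standard Lebesgue space I will use that $\mu$ is nonatomic, so that a measurable set may be split into pieces of any prescribed measures summing to its own (this is genuinely needed: a transposition of two atoms of equal mass cannot be approximated by conditional expectations, since on a finite space every nontrivial such product is singular while a permutation is not). First I would reduce to simple functions: choosing a single finite partition $\{A_1,\dots,A_L\}$ of $X$ and simple functions $g_i$ constant on its blocks with $\|f_i-g_i\|_\infty<\eps/4$, it suffices --- by the contractivity of the $\E_j$ and the isometry $\|h\circ T\|_\infty=\|h\|_\infty$ --- to find one sequence of $\sigma$-algebras with $\|g_i\circ T-\E_m\cdots\E_1 g_i\|_\infty<\eps/2$ for all $i$. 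Setting $R_{pq}:=A_p\cap T^{-1}A_q$, the tuple $(g_1,\dots,g_n)$ is constant on each $R_{pq}$, equal to its value on the block $A_p$, whereas $(g_1\circ T,\dots,g_n\circ T)$ is constant on $R_{pq}$ equal to the value-tuple of $A_q$; so the task, performed on all the $g_i$ at once, is to relabel $R_{pq}$ from (the value-tuple of) $A_p$ to (the value-tuple of) $A_q$.

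The engine is the following consequence of \Cref{tanks}: two cells $B,B'$ of equal measure carrying constant values $a,b$ can be swapped up to $\|\cdot\|_\infty$-error $2|a-b|/\sqrt n$, leaving the rest of $X$ untouched. Indeed, subdivide $B$ and $B'$ into $n$ subcells each of their common measure, regard the subcells of $B$ as the originally ``full'' tanks, and run the equilibration sequence of \Cref{tanks}; on the value vector this acts by a matrix $M$ which fixes constants, is an $\ell^\infty$-contraction, and sends $(1,\dots,1,0,\dots,0)$ to within $2/\sqrt n$ of $(0,\dots,0,1,\dots,1)$. Writing $(a,\dots,a,b,\dots,b)=b\,\mathbf 1+(a-b)(1,\dots,1,0,\dots,0)$ gives the claim, and since every equilibration pairs two subcells of $B\cup B'$, the function is unchanged outside $B\cup B'$. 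Iterating this and writing an arbitrary permutation of a finite family of mutually equal-measure cells as a product of transpositions realizes that permutation (on all the $g_i$ simultaneously) up to an error which is a fixed multiple of $1/\sqrt n$; the intermediate states are cell-constant only up to an $O(1/\sqrt n)$ amount, but, the equilibrations being contractions, this does not accumulate badly.

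It remains to produce the equal-measure cells. I would not discretize $T$ itself; instead, consider the nonnegative matrix $P_{pq}:=\mu(R_{pq})$, whose $p$-th row and $p$-th column both sum to $\mu(A_p)$ (as $T$ preserves $\mu$). Its off-diagonal part is therefore balanced at every vertex, i.e.\ a circulation, and so decomposes into finitely many simple directed cycles $\gamma_1,\dots,\gamma_K$ with positive weights $w_1,\dots,w_K$; using nonatomicity, split each $R_{pq}$ with $p\neq q$ into pieces $R_{pq}^{(k)}$ of measure $w_k$, one per cycle through the edge $(p,q)$. For a cycle $\gamma_k=(p_1\to p_2\to\cdots\to p_s\to p_1)$ the pieces $R_{p_1p_2}^{(k)},\dots,R_{p_sp_1}^{(k)}$ all have measure $w_k$, the $j$-th carries the label of $A_{p_j}$ and wants that of $A_{p_{j+1}}$, so the desired relabeling is exactly the cyclic permutation of these $s$ equal-measure cells, which the engine realizes; the diagonal cells $R_{pp}$ need no action, and the cells used by distinct cycles are disjoint, so the sequences over the $K$ cycles may simply be concatenated. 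Choosing $n$ large enough that the total accumulated error falls below $\eps/2$ completes the argument.

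The main obstacle is, I expect, less a single hard step than the tension created by asking for $\|\cdot\|_\infty$-approximation: this forbids the naive route of approximating $T$ by a permutation of a fixed fine partition (off a small set, $f\circ T$ and its discretization can differ by $\|f\|_\infty$) and also forbids perturbing $\{A_p\}$ to make the $\mu(R_{pq})$ rational, which is why the $\sigma$-algebras must be taken adapted to $T$ (to the overlaps $R_{pq}$) and the equal-measure cells extracted from the cycle decomposition rather than from a global equipartition. A secondary point, handled by the affine trick together with the fact that the strategy of \Cref{tanks} never touches a non-designated tank, is that \Cref{tanks} only moves one particular vector near its target, whereas here the whole product of conditional expectations must act like a permutation on every cell-constant function.
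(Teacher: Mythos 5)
Your proof is correct, but it takes a genuinely different route from the paper's. Both proofs share the same overall shape: discretize the $f_i$ by a common finite partition, reduce the problem to swapping values between equal-measure cells, and use \cref{tanks} (with subcells playing the role of tanks) to realize each swap by conditional expectations. Where they diverge is in how they reduce a general measure-preserving automorphism to a family of realizable swaps. The paper first proves a composition lemma (approximability is closed under $T\circ S$, via transporting $\sigma$-algebras by $T$ and contractivity), then invokes Ryzhikov's theorem that every automorphism of a standard Lebesgue space is a product of three involutions; for an involution $T$, a measurable selection of the order-$2$ orbits splits $X$ into a fixed-point set and two halves $A$, $B=T(A)$, and the pieces $A_{i,j}$, $B_{i,j}=T(A_{i,j})$ of the induced discretization already come in equal-measure pairs that need to exchange values. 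You instead handle a general $T$ directly: form the nonnegative matrix $P_{pq}=\mu(A_p\cap T^{-1}A_q)$, observe that its off-diagonal part is a circulation because $T$ preserves measure, decompose it into weighted simple cycles, split each $R_{pq}$ accordingly by nonatomicity, and realize the relabeling on each cycle as a cyclic permutation of equal-measure cells (itself a product of transpositions, each handled by the engine). Your route buys elementarity --- it avoids Ryzhikov's nontrivial ergodic-theoretic theorem and the measurable-selection step, at the cost of the flow-decomposition bookkeeping and a slightly more delicate error-accumulation bound across the transpositions (which you correctly dispose of via $\ell^\infty$-contractivity). The paper's route buys brevity once the involution reduction is granted, since the pairing $A_{i,j}\leftrightarrow B_{i,j}$ makes every needed operation a single transposition with no cycles to unwind.
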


In other words, if one equips the space of bounded operators $L^\infty(X) \to L^\infty(X)$ with the strong operator topology, then the closure of the semigroup generated by conditional expectations contains the group of measure-preserving automorphisms.
Note that it does not matter whether we take $L^\infty(X)$ to consist of the real-valued or the complex-valued bounded measurable functions.

In the following two sections, we prove these results and give some additional context.

\begin{rem}
	\label{story}
	For us, the equilibration puzzle arose from the following question asked by Paolo Perrone.
	Given $f, g \in L^\infty(X)$ on a standard probability space $X$, write $f \ge g$ if $g$ can be written as a conditional expectation of $f$ with respect to some sub-$\sigma$-algebra.
	Then is the transitive closure of this relation an interesting new preorder?

	Our answer is negative: In order for it to be interesting, one will also at least want to take the topological closure in $L^\infty$.
	But then \cref{dynamics} forces $f \ge f \circ T \ge f$ for any measure-preserving automorphism $T : X \to X$, and thus one is forced to identify all functions related by such automorphisms.
	The resulting equivalence classes are precisely the compactly supported distributions on $\R$, and the preorder reduces to the usual Choquet order on distributions~\cite[Theorem~1.3.6]{winkler}.
	We discovered the water tank puzzle and its solution in the process of proving \cref{dynamics}.
\end{rem}

\section{Optimal equilibration strategies and proof of \texorpdfstring{\cref{tanks}}{Theorem 1.1}}
\label{optimal_strategies}

\lstdefinestyle{mystyle}{
    backgroundcolor=\color{white},   
    basicstyle=\ttfamily\small,
    breakatwhitespace=false,         
    breaklines=true,                 
    captionpos=b,                    
    keepspaces=true,                 
    numbers=left,                    
    numbersep=5pt,                  
    showspaces=false,                
    showstringspaces=false,
    showtabs=false,                  
    tabsize=2
}
\lstset{style=mystyle}

\begin{figure}
		\begin{lstlisting}
		For i in red tanks:
			For j in blue tanks:
				Equilibrate tanks i and j\end{lstlisting}
	\caption{Pseudocode for the first water tank strategy.}
	\label{pseudocode1}
\end{figure}

One interesting strategy\footnote{What we mean by a \emph{strategy} is simply any finite sequence of pairs of tanks, thought of as specifying a sequence of pairwise equilibrations.} to transfer water from the full tanks to the empty ones is given by the pseudocode of \cref{pseudocode1}.
To simplify the terminology, we call the originally full tanks \newterm{red} and the originally empty ones \newterm{blue}; this choice of colours hints at the following interpretation in terms of heat exchange.

\begin{figure}
\begin{tikzpicture}

\def\pipewidth{1.6cm}
\def\pipeheight{7cm}

\shade[top color=red,bottom color=red!30!blue] 
  (-\pipewidth,0) rectangle (0,\pipeheight);
\draw[very thick] (-\pipewidth,0) rectangle (0,\pipeheight);

\shade[bottom color=blue,top color=blue!30!red] 
  (0,0) rectangle (\pipewidth,\pipeheight);
\draw[very thick] (0,0) rectangle (\pipewidth,\pipeheight);

\foreach \y in {0, 1.5, 3, 4.5, 6} {
  \draw[->, thick, green!30!gray] ($ (\pipewidth/2 - 0.2cm, \y + 0.0) $) -- ($ (\pipewidth/2 - 0.2cm, \y + 1.0) $);
  \draw[->, thick, green!30!gray] ($ (\pipewidth/2 + 0.3cm, \y + 0.0) $) -- ($ (\pipewidth/2 + 0.3cm, \y + 1.0) $);
  \draw[->, thick, green!30!gray] ($ (-\pipewidth/2 - 0.3cm, \y + 1.0) $) -- ($ (-\pipewidth/2 - 0.3cm, \y + 0.0) $);
  \draw[->, thick, green!30!gray] ($ (-\pipewidth/2 + 0.2cm, \y + 1.0) $) -- ($ (-\pipewidth/2 + 0.2cm, \y + 0.0) $);
  \draw[very thick, decorate, yellow!30!gray, decoration={
    zigzag, segment length=4.9, amplitude=.9}] ($ (-\pipewidth/5, \y + 0.5) $) -- ($ (\pipewidth/5, \y + 0.5) $);
}

\node[anchor=south] at (-\pipewidth/2,\pipeheight) {very hot};
\node[anchor=north] at (-\pipewidth/2,0) {cold};
\node[anchor=south] at (\pipewidth/2,\pipeheight) {\phantom{y\!\!\!}hot};
\node[anchor=north] at (\pipewidth/2,0) {very cold};

\end{tikzpicture}
\caption{Schematic of a countercurrent heat exchanger. Hot fluid enters in the left tube on top and cold fluid in the right tube at the bottom. Where the tubes touch, heat is exchanged (squiggly lines). As each fluid exits its tube, its temperature is close to the temperature of the other fluid as it enters, and matches it exactly in the ideal case.}
\label{heat_exchanger}
\end{figure}

\begin{rem}
	\label{heat}
	The strategy of \cref{pseudocode1} is closely related to \newterm{countercurrent heat exchangers}, which are devices for transferring heat from one fluid to another by running them in opposite directions through tubes exchanging heat (\cref{heat_exchanger}).

	More precisely, suppose that each fluid comes in $n$ discrete chunks.
	These chunks move along the tubes in steps that are discrete in both time and space; assume that the moves of the left chunks alternate with those of the right chunks and that 
	the tubes themselves are long enough to hold all chunks at once.
	At each step, the chunk in the left tube equilibrates with the chunk in the right tube at the same height, and then they both move on.
	Then what happens in this heat exchanger is equivalent to the for loops of \cref{pseudocode1}.
	The final two equilibration steps correspond to each fluid equilibrating on its own after all chunks have moved through the tubes.
\end{rem}


We can also ask how much water can be transferred from red to blue tanks starting with any initial configuration of water levels.
A non-deterministic strategy for this more general problem is given by the pseudocode of \cref{pseudocodeg}.
In the case where all red tanks start full and all blue ones empty, the strategy of \cref{pseudocode1} is an instance of this, as are all of the alternative protocols considered in~\cite[Appendix~F]{COK}.

The following optimality result is due to user \emph{Nitrodon} on puzzling.stackexchange.com, and we present a corrected version of their proof.

\begin{thm}[{\cite{nitrodon}}]\label{thm:nitrodon}
	Any strategy following the prescription of \cref{pseudocodeg} is optimal: no other finite sequence of equilibrations transfers more water from the red tanks to the blue ones.
\end{thm}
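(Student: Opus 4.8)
The plan is to recast optimality as a statement about a \emph{monovariant} and then to exhibit one. Since every equilibration preserves the total amount of water $\sum_i x_i$, transferring the most water from the red tanks to the blue ones is equivalent to minimizing the final total water content $R(x) := \sum_{i\text{ red}} x_i$ of the red tanks; and the two concluding steps of the strategy---equilibrating the red tanks among themselves, then the blue ones---leave $R$ unchanged, so they are irrelevant here. Let $V$ be the value of $R$ at the end of a run of the strategy of \cref{pseudocodeg}. The goal is then to prove $R(y) \ge V$ for every configuration $y$ reachable from the all-full/all-empty start by a finite sequence of pairwise equilibrations. (That all runs of \cref{pseudocodeg} reach the same value of $R$ is part of the claim and may be checked directly.)

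The crux is a function $\Phi$ on configurations with: (i) $\Phi(x') \ge \Phi(x)$ whenever $x'$ comes from $x$ by one equilibration; (ii) $\Phi(x) \le R(x)$ always; (iii) $\Phi$ equals $V$ on the starting configuration. These suffice: for reachable $y$, (i) along a realizing sequence together with (iii) gives $\Phi(y) \ge \Phi(\mathrm{start}) = V$, and then (ii) gives $R(y) \ge \Phi(y) \ge V$; in particular $\Phi \equiv V$ along the whole \cref{pseudocodeg} run, agreeing with $R$ only at its end.

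The shape of $\Phi$ is already visible for $n = 1$: there $\Phi(x_{\mathrm{red}}, x_{\mathrm{blue}}) = \min\{x_{\mathrm{red}},\ \tfrac12(x_{\mathrm{red}} + x_{\mathrm{blue}})\}$---the smaller of the current red level and the red level after equilibrating the two tanks---works, with (ii) and (iii) immediate and (i) reducing to $\min\{a, m\} \le m$ for $m$ the common post-equilibration level. For general $n$ one expects $\Phi$ to be built from the sorted red levels $\rho_1 \ge \dots \ge \rho_n$ and sorted blue levels $\beta_1 \le \dots \le \beta_n$---for instance as a sum over $k$ of a pairwise term comparing $\rho_k$ and $\beta_k$, clipped so as not to overshoot the red contribution---mirroring the layered structure of the configuration output by \cref{pseudocode1}, in which the red and blue levels interleave into one monotone pattern with a single pinch point where the highest red meets the lowest blue (compare the heat-exchanger picture of \cref{heat_exchanger}).

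The main obstacle is to pin down the correct $\Phi$ and verify (i): monovariance should be checked by a case analysis of how the sorted red/blue profile changes under a merge---a red--red, a blue--blue, and a red--blue merge, the last subdivided according to which tank is higher. Properties (ii) and (iii) should then be routine: (ii) because a single \cref{pseudocodeg} move lowers $R$ exactly to the quantity $\Phi$ tracks, and (iii) by evaluating $\Phi$ at the all-full/all-empty configuration and matching it with the $V$ read off from \cref{pseudocode1}. A possible alternative is a direct exchange argument---rewriting any strategy into \cref{pseudocodeg} normal form without transferring less---but there the difficulty is that changing one equilibration perturbs the entire downstream trajectory, so I expect the monovariant route to be the cleaner one.
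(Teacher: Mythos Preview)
This is a plan, not a proof: the monovariant $\Phi$ is never actually constructed for $n>1$. You describe the properties it must have and gesture at its shape (``a sum over $k$ of a pairwise term comparing $\rho_k$ and $\beta_k$, clipped\ldots''), but you never write it down, and you explicitly flag the verification of (i) as ``the main obstacle'' without resolving it. Until $\Phi$ is exhibited and (i)--(iii) are checked, there is no argument.

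There is also a concrete obstruction to step (iii) as you describe it. You propose to verify $\Phi(\text{start})=V$ ``by evaluating $\Phi$ at the all-full/all-empty configuration and matching it with the $V$ read off from \cref{pseudocode1}''. But the paper states that no analytic expression for the outcome of \cref{pseudocode1} is known---only the asymptotic of \cref{conj1} is conjectured. So any closed-form $\Phi$ cannot be matched against $V$ by computation; you would instead have to prove that $\Phi$ is \emph{conserved} along every \cref{pseudocodeg} run and coincides with $R$ at termination, which is a further invariance statement you do not address. Note too that the theorem is stated for arbitrary initial water levels, whereas your plan treats only the all-full/all-empty start.

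For comparison, the paper takes precisely the exchange-argument route you set aside. It shows that any strategy can be rewritten, without loss, into one obeying three structural constraints (each step equilibrates a red tank with a strictly lower blue tank; no pair is used twice; no third tank lies strictly between the two levels being merged), and that these constraints force the \cref{pseudocodeg} shape. The ``downstream perturbation'' difficulty you anticipate is handled by the key observation that a single equilibration is the average of doing nothing and swapping, so replacing it by whichever of these two is no worse---and then absorbing the swap into a relabeling of all later steps---yields a strategy of the same length that is at least as good. This device is what makes the exchange argument tractable.
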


\begin{figure}
	\begin{lstlisting}
		Initialization:
			sort all tanks by decreasing water level from left to right;
			break ties such that red tanks are to the
			right of blue tanks of the same level

		While there is a red tank left of a blue tank:
			Equilibrate any red tank directly left of any blue tank \end{lstlisting}
	\caption{Pseudocode for a non-deterministic generalization of the two for loops of \cref{pseudocode1} to arbitrary initial water levels.}
	\label{pseudocodeg}
\end{figure}

\begin{proof}
	We write $x_a$ for the water level in each tank $a$.
	For the following arguments, we will use the fact that any strategy can act on any initial configuration of water levels, and its overall effect is linear in the $x_a$, since the strategy is represented by a bistochastic matrix. 
	Moreover, the effect of a single equilibration step is precisely the expectation value of randomly doing nothing or swapping the contents of the two tanks involved.
	Hence doing one of these two things instead of that equilibration step, while doing everything else the same, will result in a ``strategy with permutations'' that is at least as good as the original strategy.\footnote{More formally, what we mean by a strategy with permutations is that each step is either an equilibration of two tanks or a swap of their contents. Clearly strategies with permutations are much more powerful than strategies consisting of only equilibrations.}

	We now prove the claim through several intermediate steps.
	First, we argue that given any strategy $S$, there is a strategy $S'$ which transfers at least as much water to blue tanks as $S$ and additionally satisfies the following:
	\begin{enumerate}[label=(\roman*)]
		\item\label{firsti} Every step is of the following form: an equilibration of a red tank $a$ with a blue tank $b$ where $x_a > x_b$.
		\item\label{firstii} No pair of tanks is equilibrated twice.
		\item\label{firstiii} If a step equilibrates tanks $a$ and $b$, then there is no other tank whose water level lies strictly between those of $a$ and $b$ at that time.
	\end{enumerate}
	For the proof of \ref{firsti} and~\ref{firstii}, let $S'$ be any strategy that is at least as good as $S$ but as short as possible.
	We prove that $S'$ satisfies~\ref{firsti} and~\ref{firstii}.
	So suppose that~\ref{firsti} is violated for $S'$, and consider first the case that this is because the strategy equilibrates two tanks of the same color.
	Let $a$ and $b$ be the last pair of tanks of the same color which are equilibrated.
	By assumption on $S'$, removing this step results in a strictly worse strategy.
	Hence by what we observed in the previous paragraph, swapping $x_a$ and $x_b$ instead of equilibrating them results in a ``strategy with permutations'' that performs strictly better than $S'$.
	But instead of this swap, we achieve an equivalent effect by doing all subsequent equilibrations with $a$ instead with $b$ and vice versa.
	Since this would result in a strategy $S''$ that is strictly better than $S'$ and of the same length, this contradicts the assumptions on $S'$.
	Next, consider the case that $S'$ only equilibrates tanks of different colors, but that there is a step where a red tank $a$ is equilibrated with a blue tank $b$ where $x_a \le x_b$, and let us focus on the last time this happens.
	Then simply omitting this step results in a shorter strategy that is at least as good (given that all later steps transfer at least as much water from red to blue as before).
	Therefore we can assume that \ref{firsti} holds.

	Now suppose that~\ref{firstii} is violated for $S'$.
	Let tanks $a$ and $b$ be equilibrated at times $t_1$ and $t_2$ with $t_1 < t_2$, and such that no step between $t_1$ and $t_2$ uses both $a$ and $b$.
	If neither $a$ nor $b$ is used at any point between $t_1$ and $t_2$, then the second equilibration would be redundant and $S'$ would not be as short as possible.
	Thus at least one of them is used in this time interval.
	Now for all equilibrations with $a$ in this interval, do them with $b$ instead, and vice versa.
	Then the resulting new strategy $S''$ achieves the same outcome as $S$ in the same number of steps but violates~\ref{firsti}, which we already showed to be impossible.

	By~\ref{firsti} and~\ref{firstii}, we can already conclude that an optimal strategy exists, because there are only finitely many strategies satisfying these conditions.
	Thus from now on, we assume that $S'$ is an optimal strategy, and has the smallest number of steps among all such.
	For~\ref{firstiii}, we show that there is another strategy $S''$ that either obeys it or has its first violation at a later step than $S'$.
	Repeating this process results in a strategy that obeys~\ref{firstiii} as desired.

	Suppose the last violation of \ref{firstiii} in $S'$ occurs at a step where tanks $a$ and $b$ are equilibrated and the water level of tank $c$ is strictly between those of $a$ and $b$ just before that step.
	We assume that these water levels satisfy $x_a > x_c > x_b$ without loss of generality.
	By~\ref{firsti}, we know that $a$ is red and $b$ is blue.
	By symmetry, we can furthermore assume that $c$ is blue too.\footnote{Otherwise, replace all water levels by their negatives and swap $a$ and $b$.}
	Then the same outcome can be obtained as follows:
	\begin{itemize}
		\item Equilibrate $a$ and $c$.
		\item Do a partial equilibration of $b$ and $c$ such that the water level in $c$ becomes $x_c$ again.
		\item Then equilibrate $a$ and $b$.
	\end{itemize}
	By construction, this new protocol has the same overall effect as the equilibration of $a$ and $b$ in $S'$.
	Similar to the argument for~\ref{firsti}, the second step is a weighted average of doing nothing and swapping the water levels of $b$ and $c$.
	Since $S'$ is optimal, neither of these strategies can be better than it, and therefore they must both have the same value as $S'$.
	Consider the case where we swap.
	In other words, let $S''$ be the strategy that coincides with $S'$ up to the equilibration of $a$ and $b$, which instead is replaced by equilibration of $a$ and $c$, and with $b$ and $c$ swapped in all subsequent steps;
	in particular, the third step above is another equilibration of $a$ and $c$, which is redundant.
	Then by what we have just argued, this $S''$ performs at least as good as $S'$, and clearly has the same number of steps.
	Since it has one fewer violation of~\ref{firstiii}, we are done.

	In conclusion, it is enough to restrict our attention to strategies which satisfy~\ref{firsti}--\ref{firstiii}, which we do from now on.
	In this case, we automatically have the following additional properties:
	\begin{enumerate}[resume]
		\item\label{secondi} No two tanks of the same color ever have the same water level, unless they both started at that water level.
		\item\label{secondii} Red tank $a$ and blue tank $b$ are equilibrated exactly once if $x_a > x_b$ initially, and not at all if $x_a < x_b$ initially.
	\end{enumerate}
	Indeed suppose that~\ref{secondi} fails, and consider the first step that caused such a situation.
	By \ref{firsti}, this step equilibrates a red and a blue tank, so one of the tanks must have already had that water level.
	But then the equilibration must have violated \ref{firstiii}.

	Concerning \ref{secondii}, suppose that we have $x_a > x_b$ initially, and that $a$ and $b$ are never equilibrated.
	Then in the final configuration we must have $x_a \le x_b$, or else we could equilibrate them and obtain a better strategy.
	Consider the step that first caused $x_a \le x_b$, and suppose for the sake of contradiction that it was not an equilibration of $a$ and $b$.
	Then clearly either $a$ was paired with a tank with less water than $b$, or $b$ was paired with a tank with more water than $a$, and either would violate~\ref{firstiii}.
	If $x_a < x_b$ initially, then it is enough to argue that this inequality remains valid throughout the protocol.
	Indeed if we consider any step which causes $x_a \ge x_b$ for the first time,
	then $a$ had to equilibrate with a blue tank $c$ with $x_a<x_b < x_c$, which contradicts~\ref{firsti}.

	In conclusion, \ref{firsti}--\ref{secondii} have been established as achievable properties of an optimal strategy.
	To understand the strategies of this type, it is intuitive to imagine all tanks arranged by initial water level as in \cref{pseudocodeg}: originally fuller tanks are to the left of originally emptier ones, and we break ties such that red tanks are to the \emph{right} of blue tanks of the same level.
	Then by~\ref{firstiii}, each step consists of an equilibration of a red tank with a blue tank immediately to the right of it.
	By the tie breaking rule, this requires swapping the two tanks.
	Thus the strategies under consideration are all of the form of \cref{pseudocodeg}.

	Conversely, we need to show that every strategy $S$ of the form of \cref{pseudocodeg} is optimal.
	If $S'$ is any strictly better strategy, then we can assume without loss of generality that it satisfies \ref{firsti}--\ref{secondii} as well.
	Therefore its steps also correspond exactly to the pairs $(a,b)$ consisting of a red tank $a$ which is to the left of a blue tank $b$ initially.
	Suppose that $S'$ starts by first equilibrating red tank $a$ with blue tank $b$.
	Then the pair $(a,b)$ also appears somewhere in $S$, and we can take it to be the first step without loss of generality, since all possible first steps commute.
	The proof is now complete by a straightforward induction argument.
\end{proof}

As a particular instance, \Cref{thm:nitrodon} states that the strategy of \cref{pseudocode1} is optimal.
We prove the quantitative result of \Cref{thm:optimaltransport} by a careful analysis of this strategy.

\begin{proof}[Proof of \Cref{thm:optimaltransport}]
	Consider an arbitrary labelling of all red tanks from $1$ to $n$, and likewise for all blue tanks.
	Our goal is to determine the total amount of water transferred from red to blue tanks by the strategy of \cref{pseudocode1}.
	
	Let $p_{i,j}$ be the water level of red tank $i$ after equilibrating with blue tank $j$.
	It will be convenient to define $p_{i,0} \coloneqq 1$ as the initial value of red tank $i$, and similarly $p_{0,j} \coloneqq 0$ for blue tank $j\geq 1$. Then, the $p_{i,j}$ satisfy the recurrence relation
	\[
		p_{i,j} = \frac{1}{2}\left(p_{i-1,j}+p_{i,j-1}\right) \qquad \text{for $i,j\geq 1$}.
	\]
	This relation holds because before red tank $i$ interacts with blue tank $j$, red tank $i$ has water level $p_{i,j-1}$ (the value it got after interacting with tank $j-1$, or the initial value for $j=1$) while tank $j$ has level $p_{i-1,j}$ (the value after interacting with red tank $i-1$, or the initial value for $i=1$).
	
	The amount of water left in red tanks at the end of the strategy is $\sum_{k=1}^n p_{k,n}$.  We aim to find a closed form expression for it.
	To this end, define $q_{i,j} \coloneqq \sum_{k=1}^i p_{k,j}$ for $i,j\geq 0$ and $q_{0,j} = q_{i,0} = 0$. For $i,j\geq 1$, we have the recurrence relation
	\[
		q_{i,j} = \frac{1}{2}\left(q_{i-1,j}+q_{i,j-1}\right)
	\]
	because of	
	\begin{align*}
		q_{i,j} = \sum_{k=1}^i p_{k,j} = \frac{1}{2} \sum_{k=1}^i \left(p_{k-1,j} + p_{k,j-1}\right) = \frac{1}{2} \left(\sum_{k=1}^{i-1} p_{k,j} + \sum_{k=1}^i p_{k,j-1}\right) = \frac{1}{2}\left(q_{i-1,j}+q_{i,j-1}\right),
	\end{align*}
	and the initial conditions $q_{0,j} = 0$ and $q_{i,0} = i$.
	The explicit solution of this recurrence is given by
	\[
		q_{i,j} = \frac{1}{2^{i+j}} \sum_{k=1}^j q_{k,i+j-k} 
	\]

	To evaluate $q_{i,j}$ we resort to generating functions.
	Thus define
	\[
		Q(x,y) \coloneqq \sum_{i,j=1}^{\infty} q_{i,j} x^i y^j
	\]
	for $x,y\in \mathbb{C}$.
	Since $0\leq q_{i,j} \leq (i+j)/2$ (which follows from a simple induction argument), the defining power series converges for all $(x,y)\in \mathbb{C}^2$ with $|x|< 1$ and $|y|< 1$.
	By the recurrence relation, for $|x|< 1$ and $|y|< 1$ we have
	\[
		Q(x,y) = \frac{1}{2} \sum_{i,j=1}^{\infty} (q_{i-1,j}+q_{i,j-1}) x^i y^j = \frac{x+y}{2} \, Q(x,y) + \frac{x}{2(1-x)^2},
	\]
	where the second term comes from $\sum_{i=1}^\infty q_{i,0} x^i = \sum_{i=1}^\infty i x^i = x/(1-x)^2$.
	Therefore
	\[
		Q(x,y) = \frac{x}{(1-x)^2 (2 - x - y)}.
	\]
	Since we are interested in $q_{n,n}$, we next extract the diagonal of $Q(x,y)$, i.e.
	\[
		\Delta(z) = \sum_{n\geq 0} q_{n,n}z^n.
	\]
	A classic approach to obtain this diagonal is to use the residue theorem (see e.g. Theorem 2.32 in \cite{PW2024Analytic}). Fix $z\in \mathbb{C}$ and consider the complex-valued function
	\[
		Q(t,z/t) = \frac{zt}{(1-t)^2 \left(2t - z - t^2 \right)}.
	\]
	Since $Q$ converges around $(0,0) \in \mathbb{C}^2$, then for small enough $z > 0$,we have that the Laurent series expansion of $Q(t,z/t)$ in $t$ converges in an annulus $A(z)$ around $0$.
	Thus the constant term $[t^0]$ (which is a function of $z$) is the diagonal $\Delta(z)$. Let $C$ be a circle in $A(z)$ such that the origin is inside $C$, then Cauchy's integral formula yields
	\[
		\Delta(z) = [t^0]Q(t,z/t) = \frac{1}{2\pi \ii}\oint_{C} \frac{Q(t,z/t)}{t} \, \mathrm{d}t = \frac{1}{2\pi \ii}\oint_C \frac{z}{(1-t)^2 (2t - z - t^2)} \, \mathrm{d}t.
	\]
	By choosing $z$ small enough, we can choose the circle $C$ as close to $0$ as desired, so that we can apply the residue theorem ignoring large poles.
	The poles are $1$ and $1 \pm \sqrt{1-z}$, but only $1-\sqrt{1-z}$ is inside the circle $C$ for small $z$. 
	Some computation shows that the residue at $1-\sqrt{1-z}$ is
	\[
		\Delta(z) = \frac{z}{2 (1-z)^{3/2}} 
	\]
	for all $z$ in a region around $0$.
	Using the standard negative binomial series expansion gives
	\[
		\Delta(z) = \frac{1}{2} \sum_{n=1}^{\infty} \binom{n - \frac{1}{2}}{n - 1} z^n.
	\]
	Hence we can extract the desired coefficient
	\[
		q_{n,n} = \frac{1}{2} \binom{n - \frac{1}{2}}{n - 1} = \frac{\Gamma(n+\frac{1}{2})}{\sqrt{\pi} \Gamma(n)} = \frac{2}{4^n} \cdot \frac{\Gamma(2n)}{\Gamma(n)^2} 
		= \frac{n}{4^n} \cdot \frac{\Gamma(2n + 1)}{\Gamma(n + 1)^2}
		= \frac{n}{4^n} \binom{2n}{n},
	\]
	where the third step uses the Legendre duplication formula.
	This is the desired closed form expression for the amount of water left in the red tanks at the end of the strategy.

	The asymptotic expansion of the central binomial coefficient 
	\[
	\binom{2n}{n} = \frac{4^n}{\sqrt{\pi n}} \left( 1 - \frac{1}{8 n} + O(n^{-2})  \right),
	\]
	is standard, and follows e.g.~from Stirling's formula.
	This implies that $\frac{n}{4^n} \binom{2n}{n} = \sqrt{\frac{n}{\pi}} + O(n^{-{1/2}})$, completing the proof.
\end{proof}

\section{Proof of \texorpdfstring{\cref{dynamics}}{Theorem 1.2}}

\begin{lem}
	If $S,T : X \to X$ are measure-preserving transformations which satisfy the claim of \cref{dynamics}, then so does $T \circ S$.
\end{lem}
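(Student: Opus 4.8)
The plan is to use the identity $f_i \circ (T \circ S) = (f_i \circ T) \circ S$ together with the basic fact that every conditional expectation, and hence every finite composite of conditional expectations, is a contraction for $\|\cdot\|_\infty$. This ensures that when we concatenate a sequence of conditional expectations approximating $\cdot \circ T$ with one approximating $\cdot \circ S$, the approximation errors merely add up rather than amplify.

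Concretely, fix $f_1, \dots, f_n \in L^\infty(X)$ and $\eps > 0$. First I would apply the hypothesis on $T$ to the tuple $(f_1, \dots, f_n)$ with tolerance $\eps/2$, obtaining sub-$\sigma$-algebras $\F'_1, \dots, \F'_q$ such that, writing $E' := \E'_q \cdots \E'_1$ for the associated composite operator, $\|f_i \circ T - E' f_i\|_\infty < \eps/2$ for every $i$. Next, note that $g_i := f_i \circ T$ again lies in $L^\infty(X)$ (indeed $\|g_i\|_\infty = \|f_i\|_\infty$ since $T$ is measure-preserving), so the hypothesis on $S$ may be applied to the tuple $(g_1, \dots, g_n)$ with tolerance $\eps/2$, yielding $\F_1, \dots, \F_p$ such that, with $E := \E_p \cdots \E_1$, one has $\|g_i \circ S - E g_i\|_\infty < \eps/2$ for all $i$.

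I would then conclude by concatenating: take the sequence $\F'_1, \dots, \F'_q, \F_1, \dots, \F_p$, whose associated operator is $E E'$. Since $f_i \circ (T \circ S) = g_i \circ S$, the triangle inequality gives
\[
	\left\| f_i \circ (T\circ S) - E E' f_i \right\|_\infty \le \left\| g_i \circ S - E g_i \right\|_\infty + \left\| E(g_i - E' f_i) \right\|_\infty .
\]
The first term is $< \eps/2$ by the choice of the $\F_j$. For the second, each $\E_j$ satisfies $\|\E_j h\|_\infty \le \|h\|_\infty$, so the same holds for the composite $E$, whence $\|E(g_i - E' f_i)\|_\infty \le \|g_i - E' f_i\|_\infty = \|f_i \circ T - E' f_i\|_\infty < \eps/2$. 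Adding the two bounds yields the desired estimate $< \eps$.

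There is essentially no obstacle here: the content is just the contractivity of conditional expectations on $L^\infty$ and the bookkeeping that the hypothesis for $S$ must be invoked on the shifted tuple $(f_i \circ T)_i$ rather than on $(f_i)_i$. Conceptually, this lemma is what lets one assemble an arbitrary measure-preserving automorphism out of generators, so that \cref{dynamics} reduces to treating a generating family of automorphisms of the standard model, which is where the water-tank analysis of \cref{tanks} enters.
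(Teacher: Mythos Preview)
Your argument is correct and follows the same core idea as the paper: use the triangle inequality together with the fact that conditional expectations are contractions on $L^\infty$, so that the error from the first block of conditional expectations is not amplified by the second block. The paper additionally introduces the transformed sub-$\sigma$-algebras $\mathcal{G}' = T(\mathcal{G})$ and the intertwining identity $\E'(f)\circ T = \E(f\circ T)$, but your direct concatenation of the two sequences of $\sigma$-algebras already yields the required estimate without this detour, and even avoids any appeal to invertibility of $T$.
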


\begin{proof}
If $\mathcal{G} \subseteq \Sigma$ is any sub-$\sigma$-algebra, then
\[
	\mathcal{G}' \coloneqq \{ T(A) \: : \: A \in \mathcal{G} \}
\]
is a sub-$\sigma$-algebra as well. 
With the respective conditional expectation operators on $L^\infty(X)$ denoted $\E$ and $\E'$, we have
\[
	\E'(f) \circ T = \E(f \circ T)
\]
for all $f \in L^\infty(X)$.\footnote{This is because for every $\mathcal{G}'$-measurable $g$, we have
	\[
		\int g(x) \, \E'(f)(T(x)) \, \mu(dx) = \int g(T^{-1}(x')) \, \E'(f)(x') \, \mu(dx') = \int g(T^{-1}(x')) \, f(x') \, \mu(dx') = \int g(x) \, f(T(x)) \, \mu(dx),
	\]
	using the substitution $x = T^{-1}(x')$ and the fact that $g \circ T^{-1}$ is $\mathcal{G}'$-measurable.}
If we now assume
\begin{align*}
	\left\| f \circ T - \E_m \cdots \E_1 f \right\|_\infty & < \eps \\[2pt]
	\left\| (f \circ T) \circ S - \E_{m + \ell} \cdots \E_{m + 1} (f \circ T) \right\|_\infty & < \eps,
\end{align*}
then we can estimate
\begin{align*}
	\big\| f \circ T \circ S - \E'_{m + \ell} \cdots \E'_{m + 1} & \E_m \cdots \E_1 f \big\|_\infty \\[2pt]
		& \le \left\| (f \circ T) \circ S - \E_{m + \ell} \cdots \E_{m + 1} (f \circ T) \right\|_\infty \\[2pt]
		& \qquad + \left\| \E'_{m + \ell} \cdots \E'_{m + 1} f \circ T - \E'_{m + \ell} \cdots \E'_{m + 1} \E_m \cdots \E_1 f \right\|_\infty \\[2pt]
		& < \eps + \eps,
\end{align*}
where the second step also uses the fact that taking conditional expectations is contractive in the supremum norm.
\end{proof}

It was shown by Ryzhikov that every measure-preserving automorphism on a standard Lebesgue space can be written as a composition of three involutive ones~\cite{Ryzhikov}. 
Therefore for the proof of \cref{dynamics}, we can assume without loss of generality that $T$ is an involution.
Then by making a choice of measurable selection for the orbits of order $2$, we can decompose $X$ into a disjoint union
\[
	X = F \sqcup A \sqcup B,
\]
where $F$ is the set of fixed points of $T$ and $B = T(A)$.
We focus on the case $F = \emptyset$ from now on, since the following construction can trivially be extended from $A \sqcup B$ to all of $X$.

In the remaining case $X = A \sqcup B$, assume $\|f_i\| \le 1$ for all $i$ without loss of generality, and consider the $f_i$ together as a single function $f : X \to [-1,1]^n$.
Choosing a partition of the unit cube $[-1,1]^n$ into subsets $S_1, \dots, S_\ell$ of diameter $< \eps$ induces a partition of $X$ into the sets
\begin{align*}
	A_{i,j} & \coloneqq f^{-1}(S_i) \cap T(f^{-1}(S_j)) \\
	B_{i,j} & \coloneqq T(f^{-1}(S_i)) \cap f^{-1}(S_j)
\end{align*}
for $i, j = 1, \ldots, \ell$, so that $B_{i,j} = T(A_{j,i})$.
By definition, $f$ varies by less than $\eps$ on each of these pieces.
By first applying a conditional expectation which averages $f$ on these pieces, we can assume without loss of generality that $f$ is actually constant on each piece, since this changes the values of $f$ by at most $\eps$.

For fixed $i$ and $j$, we choose a further partition of $A_{i,j}$ into many subsets of equal measure and use $T$ to transport this partition into one of $B_{i,j}$.
If we now identify these subsets with the water tanks of \cref{tanks}, then we are exactly in a situation where \cref{tanks} applies, where an equilibration step clearly corresponds to a conditional expectation.
Hence we can find a sequence of conditional expectation operators which takes $f|_{A_{i,j} \cup B_{i,j}}$ to $(f \circ T)|_{A_{i,j} \cup B_{i,j}}$ with arbitrary accuracy and does not change $f$ anywhere else.
Doing this for all $i$ and $j$, and noting that these operations commute with each other as $i$ and $j$ vary, finishes the proof of \cref{dynamics}.

\section{The finite case and Dalton transfers}

In this final section, we comment on the connection between \cref{tanks} and the theory of majorization, and also discuss some related open problems.
We recall the definition first.

\begin{defn}[{e.g.~\cite{MOA}}]
	\label{majorization}
	For $x, y \in \R^n$, one says that $x$ \newterm{majorizes} $y$, written $x \succeq y$, if there is a doubly stochastic matrix $D$ such that $y = D x$.
\end{defn}

Among other equivalent formulations, the majorization relation $\succeq$ can also be characterized as the transitive closure of the following of its instances:
\begin{itemize}
	\item For every permutation matrix $\pi$ and $x \in \R^n$, we have $x \succeq \pi x$.
	\item If $x, y \in \R^n$ differ only in coordinates $i$ and $j$, and for these are such that
		\begin{equation}
			\label{robin_hood}
			x_i \le y_i \le y_j \le x_j, \qquad x_i + x_j = y_i + y_j,
		\end{equation}
		then $x \succeq y$. 
\end{itemize}
If one takes the vectors $x, y \in \R^n$ to model distributions of wealth or income of $n$ individuals, then~\eqref{robin_hood} amounts to a redistribution of some wealth from richer individual $j$ to poorer individual $i$, but only so much that $j$ remains at least as wealthy as $i$, as expressed by the inequality $y_i \le y_j$.
Such an operation is known as a \newterm{Dalton transfer}, and the wealth interpretation lies behind the applications of majorization to social welfare theory and measures of economic inequality~\cite{DSS,TW}.
Majorization is important also in other areas, for example in quantum information theory in the context of entanglement transformations~\cite{nielsen}.

We can now think of the water tank problem as asking what happens if one forbids the permutations and \emph{only} allows Dalton transfers as in~\eqref{robin_hood}.

\begin{defn}
	For $x, y \in \R^n$, we write $x \succeq_D y$ if there is a finite sequence of Dalton transfers that transforms $x$ into $y$.
\end{defn}

Since the relation $\succeq_D$ is the transitive closure of the Dalton transfers by definition, it is also a partial order relation on $\R^n$.
How do we decide whether $x \succeq_D y$ holds for given $x, y \in \R^n$?

As far as we know, the partial ordering $\succeq_D$ and the problem of its characterization was first studied by Zylka with heat transfer in mind~\cite{zylka}.
Subsequently, Thon and Wallace have expanded on Zylka's work and emphasized the connection to social welfare theory~\cite{TW}.
To state the characterization of $\succeq_D$ developed by Thon and Wallace based on Zylka's results, let us say that a \newterm{full Dalton transfer} is a Dalton transfer as in~\eqref{robin_hood} with $y_i = y_j = (x_i + x_j)/2$, or equivalently an equilibration step as in the previous sections.

\begin{thm}[{\cite{zylka,TW}}]
	\label{dalton_characterization}
	For $x, y \in \R^n$ we have $x \succeq_D y$ if and only if there is a sequence of full Dalton transfers
	\begin{equation}
		\label{z_sequence}
		z^{(0)} \rightsquigarrow z^{(1)} \rightsquigarrow \cdots \rightsquigarrow z^{(k)}
	\end{equation}
	with
	\[
		z^{(0)} = x, \qquad z^{(k)} \succeq y,
	\]
	and such that:
	\begin{enumerate}
		\item No two transfers happen between the same pair of individuals.
		\item Each transfer $z^{(i-1)} \rightsquigarrow z^{(i)}$ is between two individuals for which their wealth relation at time $i-1$ satisfies that
			\begin{itemize}
				\item it is opposite to that in $y$, and
				\item there is no other individual of strictly intermediate wealth at that time.
			\end{itemize}
	\end{enumerate}
\end{thm}

This result facilitates an algorithmic check of whether $x \succeq_D y$ holds, since the number of possible sequences~\eqref{z_sequence} is finite and the majorization relation $z^{(m)} \succeq y$ can be decided by checking a finite number of inequalities~\cite{MOA}.
There is an elegant recursive implementation of this algorithm~\cite[p.~464]{TW}.
One can use it to show that the set of points reachable from $x = (13,4,1)$ in $\R^3$ is not convex~\cite[Fig.~1]{TW}.

In quantum thermodynamics, recent work by Lostaglio and Korzekwa has considered a more general version of $\succeq_D$ which they call \emph{continuous thermomajorization}, for which they have also proven a variant of \cref{dalton_characterization}~\cite[Theorem~4]{LK}.

\printbibliography

\end{document}